\newcommand{\pref}[1]{{\rm (\ref{#1})}}
\def\area{\mathrm{area}}
\def\diag{\mathrm{diag}}
\def\N{{\mathbb N}}
\def\P{{\mathbb P}}
\def\scalar#1#2{\langle #1,#2\rangle}
\def\Scalar#1#2{\Big\langle #1,#2\Big\rangle}
\def\LargeScalar#1#2{\left\langle #1,#2\right\rangle}
\def\B{B}
\def\SS{S}
\def\Ss{\SS_{m,n}^{(k)}}
\def\Bb{\B_{m,n}^{(k)}}
\def\Sst{\SS_{m,n}^{(k,\ell)}}
\def\Bbt{\B_{m,n}^{(k,\ell)}}
\def\mbf#1{{\mathbf #1}}
\def\auteur#1{{\sc #1}}
\def\titreref#1{{\em #1}}
\def\vol#1{{\bf #1}}
\def\defn#1{\bleu{{\bf #1}}}
\newcommand{\Schroder}{\mathcal{S}}
\def\ou{\"o}
\def\bleu{\textcolor{blue}}
\def\rouge{\textcolor{red}}
\newtheorem{theorem}{\bleu{Theorem}}
\newtheorem{proposition}[theorem]{\bleu{Proposition}}
\newtheorem{corollary}[theorem]{\bleu{Corollary}}
\newtheorem{remark}[theorem]{\bleu{Remark}}
\def\nord#1#2{\draw[red,ultra thick] (#1,#2) -- (#1,#2+1)}
\def\est#1#2{\draw[red,ultra thick] (#1,#2) -- (#1+1,#2)}
\def\nordest#1#2{\draw[red,ultra thick] (#1,#2) -- (#1+1,#2+1)}
\def\triangle#1#2{\filldraw[color=yellow, fill=yellow](#1,#2)-- (1+#1,#2) -- (1+#1,1+#2) -- cycle}
\def\ligneaire#1#2#3{\foreach \x in {1,...,#3} \triangle{#1+\x-1}{#2};}
\def\grille#1#2{\foreach \x in {0,...,#1} \draw[blue] (\x,0) -- (\x,#2);
                        \foreach \x in {0,...,#2}  \draw[blue] (0,\x) -- (#1,\x);
                        \draw[black, thick] (0,0) -- (#1,#2) ;}
\begin{document} 

\title[Schr{\ou}der Combinatorics]{\bleu{Rectangular Schr\"oder Parking Functions Combinatorics}}
\author[J.-C.~Aval]{Jean-Christophe Aval}
\address{LaBRI, CNRS - Universit\'e de Bordeaux,
351 cours de la Lib\'eration, 33405 Talence, France}
\author[F.~Bergeron]{Fran\c{c}ois Bergeron}
\address{D\'epartement de Math\'ematiques, UQAM,  C.P. 8888, Succ. Centre-Ville, 
 Montr\'eal,  H3C 3P8, Canada.}\date{\today. This work was supported by NSERC-Canada}
 \maketitle
\begin{abstract}
We study Schr\"oder paths drawn in a $(m,n)$ rectangle,
for any positive integers $m$ and $n$.
We get explicit enumeration formulas, closely linked to
those for the corresponding $(m,n)$-Dyck paths.
Moreover we study a Schr{\ou}der version of $(m,n)$-parking functions,
and associated $(q,t)$-analogs.

\end{abstract}
 \parskip=0pt

{ \setcounter{tocdepth}{1}\parskip=0pt\footnotesize \tableofcontents}
\parskip=8pt

\section{Introduction}

Schr\"oder numbers (1, 2, 6, 22, 90, 394, 1806, \dots; sequence A006318 in \cite{oeis})
enumerate Schr\ou der paths, defined as paths from $(0, 0)$ to $(n, n)$ made of 
steps (1, 0), (0, 1) and (1, 1), that never go strictly below the diagonal. 
These paths may be seen as a generalization of Dyck paths
(for which only the first two types of steps are allowed), 
which are enumerated by Catalan numbers.
The aim of this work is to investigate properties of such paths embedded
not into a square ($n,n)$, but into a rectangle $(m,n)$.
Partial results are already known, in the case where $m$ and $n$ are coprime,
in particular when $m=rn+1$, which reduces to $mn=n$ \cite{song}.
In the present article, (see Proposition \ref{prop:bizley_schroder})
we obtain a generating series formula, in the general case (no coprimality required). 
As we shall see, the result, as well as the method, is closely related to the case of Dyck paths
studied in~\cite{AB14}.

Moreover, we study Schr\ou der parking functions, defined as 
labeled rectangular Schr\ou der paths, and investigate $q-$ and $(q,t)-$analogs
of our enumeration formulas, in which the parameter $q$ takes into account the area
between the path and the diagonal (a precise definition is given at Subsection \ref{subsec:area}). 


\section{Schr\"oder paths}

\subsection{Schr\"oder polynomials}
Although the general notion of $(m,n)$-Schr\"oder paths considered here seems to be new, the special case of $m=rn+1$ has been considered in~\cite{song}. The ``classical case'' corresponds to $m=n$. In this case, Schr\"oder polynomials, given by the following formula
   \begin{equation}\label{formule}
         \bleu{\SS_n(y)}:=\bleu{\sum _{k=0}^{n} \frac {1}{k+1}\binom{n}{k}\binom{n+k}{n}\,y^k},
            \end{equation}
 with $\SS_0=1$, have a long and interesting history. One of the known facts is that the coefficient of $y^k$, denoted $\SS_n^{(k)}$, enumerates ``Schr\"oder paths'' having $k$ ``diagonal steps'' (relevant definitions are recalled below). The case $k=0$ corresponds to the usual notion of ``Dyck paths'', which are well-known to be counted by Catalan numbers 
    $$\bleu{\SS_n^{(0)} = \frac{1}{n+1}\binom{2n}{n}}.$$ 
Small values of these polynomials are
\begin{eqnarray*}
      \bleu{\SS_1(y)} &=&\bleu{1+y},\\
      \bleu{\SS_2(y)} &=&\bleu{2+ 3\, y+y^2},\\
      \bleu{\SS_3(y)} &=&\bleu{5+10\, y+6\, y^2 +{y}^{3} },\\
      \bleu{\SS_4(y)} &=&\bleu{14+35\,y+30\,{y}^{2}+10\,{y}^{3}+{y}^{4}},\\
       \bleu{\SS_5(y)} &=&\bleu{42+126\,y+140\,{y}^{2}+70\,{y}^{3}+15\,{y}^{4}+{y}^{5}}.
    \end{eqnarray*}
We consider a ``rectangular'' generalization of these polynomials, parametrized by pairs $(m,n)$, with the classical case corresponding to $m=n$. Just as it transpires in the analogous situation for Dyck paths, the case when $m$ and $n$ are coprime is somewhat simpler. To make this more apparent notation-wise, we usually assume that $(m,n)=(ac,bc)$, with $(a,b)$ coprime. Hence $c$ is the greatest common divisor of $m$ and $n$. Thus the $(a,b)$-case corresponds to the coprime situation. We recall that the number of $(a,b)$-Dyck paths, is simply given by the formula
    $$\bleu{\frac{1}{a+b}\binom{a+b}{a}}.$$
The general case is probably best coined in generating series format as 
\begin{equation}\label{bizley}
\bleu{\sum_{d\geq 0}C_{ac,bc}z^d = 
          \exp\!\left(\sum_{j>1}\frac{1}{a}\binom{ja+jb}{ja}\,\frac{z^j}{j}\right)}.
  \end{equation}
The proof of this formula, going back to 1954, is due to Bizley~\cite{bizley}, who attributes it to Grossman~\cite{grossman}. 

\begin{figure}
\begin{center}
\begin{tikzpicture}[scale=.7]
\ligneaire{0}{1}{1};
\ligneaire{0}{2}{2};
\ligneaire{0}{3}{3};
\ligneaire{2}{4}{3};
\ligneaire{2}{5}{4};
\ligneaire{2}{6}{6};
\ligneaire{3}{7}{6};
\ligneaire{7}{8}{3};
\grille{12}{9};
\nord{0}{0};
\nord{0}{1};
\nord{0}{2};
\nordest{0}{3};
\est{1}{4};
\nord{2}{4};
\nord{2}{5};
\nordest{2}{6};
\nordest{3}{7};
\est{4}{8};
\est{5}{8};
\est{6}{8};
\nord{7}{8};
\est{7}{9};
\est{8}{9};
\est{9}{9};
\est{10}{9};
\est{11}{9};
\end{tikzpicture}
\begin{picture}(0,11)(11.5,.8)\setlength{\unitlength}{6.9mm}
\put(8.5,9){$3$}
\put(8.5,8){$6$}
\put(8.5,7){$6$}
\put(8.5,6){$4$}
\put(8.5,5){$3$}
\put(8.5,4){$4$}
\put(8.5,3){$2$}
\put(8.5,2){$1$}
\put(8.5,1){$0$}
\put(7,10){\underline{\bleu{Row areas}}}
\put(-5,9){$0$}
\put(-5,8){$1$}
\put(-5,7){$2$}
\put(-5,6){$3$}
\put(-5,5){$4$}
\put(-5,4){$5$}
\put(-5,3){$6$}
\put(-5,2){$7$}
\put(-5,1){$8$}
\put(-6,10){\underline{\bleu{Row labels}}}
\end{picture}
\end{center}
\vskip-10pt
\caption{The $(12,9)$-Schr\"oder path $000\overline{0}22\overline{2}\overline{3}7$.}\label{fig2}
\end{figure}

\subsection{Schr\"oder path} 
We define \defn{$(m,n)$-Schr\"oder  paths} to be sequences of points $(x,y)$ in $\N\times\N$ such that 
\begin{itemize}\itemsep=4pt
   \item they begin with $(0,0)$ and end with $(m,n)$,
   \item  all such that $my-nx\geq 0$, and
   \item with the next point obtained by either an \defn{up}, \defn{diagonal}, or \defn{right} step.
\end{itemize}
These steps respectively correspond to adding $(0,1)$, $(1,1)$ or $(1,0)$ to a point $(x,y)$. Alternatively, the path could readily be encoded as a word in the letters $u$, $d$ and $r$, with obvious conditions.
We shall denote by $\Ss$ the set of $(m,n)$-Schr\"oder  paths with exactly
$k$ diagonal steps.

Given $m$ and $n$, for each point $(u,v)$ in $\N\times \N$, we define its \defn{$(m,n)$-offset}  to be
\begin{equation}\label{offset}
   d(u,v):=mv-nu.
\end{equation}
Thus, we respectively have $d(u,v)>0$, $d(u,v)=0$ and $d(u,v)<0$, according to the case of $(u,v)$ sitting above, on, or below the diagonal $my=nx$. The \defn{low points} of (any) path are those of minimal off-set
(excluding the origin $(0,0)$). Thus, if a path goes below the diagonal, its low-points are those that sit farthest to the ``south-east'' (this rather depends on $m$ and $n$) of the path.

\subsection{Sequence encoding}
It is often practical to bijectively encode our paths in terms of a sequences $a_0a_1 \cdots a_{n-1}$,  one $a_i$ for each up or diagonal step on the path, reading them from top to bottom. For each up step, we set $a_i=k$ (resp. $a_i=\overline{k}$ for diagonal steps), where $k$ is the number of entire ``cells'' that lie to the left of the unique up (or diagonal) step at height $n-i$. The $a_i$ are said to be the \defn{parts} of $\alpha$. An $a_i$ of the form $\overline{k}$ is said to be \defn{barred}.
In this encoding, $\alpha=a_0a_1 \cdots a_{n-1}$ corresponds to an $(m,n)$-Schr\"oder path if and only if
\begin{itemize}\itemsep=4pt
   \item[(1)] $\bleu{a_0\leq a_1\leq \ldots \leq a_{n-1}}$, (with the order $0<\overline{0}<\ldots k<\overline{k}$),
   \item[(2)]  if $\bleu{a_i=\overline{k}}$, then necessarily $\bleu{a_i<a_{i+1}}$, and
   \item[(3)] for each $i$, we have  $\bleu{a_i\leq \overline{\lfloor i\,m/n\rfloor}}$.
\end{itemize}
Each unbarred $k$, between $0$ and $m$, occurs with some \defn{multiplicity}\footnote{possibly equal to 0.} $n_k$ in a path $\alpha$. Removing $0$-multiplicities, we obtain the  \defn{(multiplicity) composition} $\gamma(\alpha)$ of the sequence $\alpha$, reading these multiplicities in increasing values of $k$. For example, 
   $$ \gamma(0\overline{0}11\overline{1}244\overline{4}) = (1,2,1,2).$$
Clearly $\gamma(\alpha)$ is a composition of $n-k$, where $k$ stands for the number of diagonal steps in $\alpha$. The parts of $\gamma(\alpha)$ may be understood as the lenghts of \defn{risers} in the path. These are maximal sequences of consecutive up-steps.

Any $(m,n)$-Schr\"oder paths may be obtained by either barring or not the rightmost part of a given size in the analogous word encoding of an $(m,n)$-Dyck path. As we will see this makes the enumeration of Schr\"oder easy, once we setup the right tools.

\subsection{Symmetric function weight} 
As we will come to see more clearly later, it is interesting to consider a weighted enumeration of Schr\"oder paths, with the weight lying in the degree graded ring 
   $$\bleu{\Lambda=\bigoplus_{d\geq 0} \Lambda_d}$$
of symmetric ``functions'' (polynomials in a denumerable set of variables $\mbf{x}=x_1,x_2,x_3,\ldots$). Recall that the degree $d$ homogeneous component $\Lambda_d$ affords as a linear basis the set 
    $$\{e_\mu(\mbf{x})\ |\ \mu\vdash d\},$$
  of elementary symmetric functions, with $e_\mu(\mbf{x}):=e_{\mu_1}(\mbf{x})e_{\mu_2}(\mbf{x})\cdots e_{\mu_\ell}(\mbf{x})$ for $\mu=\mu_1\mu_2\ldots \mu_\ell$ running over the set of partitions of $d$. In turn, each factor $e_k(\mbf{x})$ is characterized by the generating function identity
  \begin{equation}
     \bleu{\sum_{k\geq 0} e_k(\mbf{x})z^k=\prod_{i\geq 1} (1+x_i\,z)}.
   \end{equation}
   with $e_0(\mbf{x}):=1$.
It easily follows that
 \begin{equation}
     \bleu{e_k(\mbf{x}+y) = e_k(\mbf{x})+e_{k-1}(\mbf{x})\,y},
   \end{equation}
 where $\mbf{x}+y$ means that we add a new variable $y$ to those occurring in $\mbf{x}$.
 
 With these notions at hand, we now simply set
   \begin{equation}\label{sym_enumer}
      \bleu{\SS_{m,n}(\mbf{x};y):=\sum_{\alpha} \alpha(\mbf{x})\,y^{\diag(\alpha)}},\qquad{\rm with}\qquad \bleu{\alpha(\mbf{x}):=\prod_{k\in\gamma(\alpha)}e_k(\mbf{x})},
      \end{equation}
  and where the sum is over the set of $(m,n)$-Schr\"oder paths $\alpha$, with $\diag(\alpha)$ denoting the number of diagonal steps in $\alpha$. Likewise, we denote by
  \begin{equation}
      \bleu{\Ss(\mbf{x}):=\sum_{\diag(\alpha)=k} \alpha(\mbf{x})},
      \end{equation}
the symmetric function enumerator of $(m,n)$-Schr\"oder paths with exactly $k$ diagonal steps, so that $\SS_{m,n}(\mbf{x};y)=\sum_k \Ss(\mbf{x})\,y^k$.     
  For example, we have,
 \begin{eqnarray*}
 \bleu{\SS_{{1,1}}(\mbf{x};y)}&=&\bleu{e_{{1}}(\mbf{x})+y},\\
\bleu{\SS_{{2,2}}(\mbf{x};y)}&=&\bleu{({e_{{11}}}(\mbf{x})+e_{{2}}(\mbf{x}))+3\,e_{{1}}(\mbf{x})\,y+{y}^{2}},\\
\bleu{\SS_{{3,3}}(\mbf{x};y)}&=&\bleu{({e_{{111}}}(\mbf{x})+3\,e_{{21}}(\mbf{x})+e_{{3}}(\mbf{x}))+ ( 6\,{e_{{11}}}(\mbf{x})+4\,e_{{2}}(\mbf{x}) ) y+6\,e_{{1}}(\mbf{x}){y}^{2}+{y}^{3}}.
\end{eqnarray*}
Observe that, for all $r$ and $n$, we have
\begin{equation}\label{classical}
     \bleu{\SS_{rn+1,n}(\mbf{x};y)=\SS_{rn,n}(\mbf{x};y)},
 \end{equation}
 since the last step of $(rn+1,n)$ must necessarily be a right step, and the coprimality of $rn+1$ and $n$  that staying below the $(rn+1,n)$-diagonal insures as well that we stay below the $(rn,n)$-diagonal. Hence we get the same set of paths.

To make some expressions more compact, we shall use ``plethystic notation'', recalling that we have
\begin{equation}\label{plet_en}
    \bleu{e_{n}[m\,\mbf{x}] :=\sum_{\nu\vdash n} (-m)^{\ell(\nu)} \frac{p_{\nu}(\mbf{x})}{z_\nu} },
\end{equation}
with $a$, $j$ and $m$ considered as ``constants'' for the purpose of plethysm.
For any partition $\nu$ of $n$, of length $\ell(\nu)$, we set
$z_\nu:=1^{d_1}d_1!2^{d_2}d_2!\dots n^{d_n}d_n!$, where $d_i$ is the number of copies of the part $i$ in $\nu$. 

\subsection{Main result}
\begin{proposition}\label{prop:bizley_schroder}
The generating function of rectangular Schr{\ou}der polynomials is given by the following equation:
\begin{equation}\label{eq:bizley-schroder}
\bleu{\sum_{d\geq 0}\SS_{ad,bd}(\mbf{x})\,z^d = 
          \exp\!\left(\sum_{j\geq 1} e_{jb}[ja\,(\mbf{x}+y)]   \,\frac{z^j}{aj}\right)}.
  \end{equation}
\end{proposition}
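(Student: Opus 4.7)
The plan is to reduce the Schr\"oder enumeration to the Dyck enumeration (handled in \cite{AB14}) via the plethystic substitution $\mbf{x} \mapsto \mbf{x}+y$. Concretely, the first step is to establish
\[ \SS_{m,n}(\mbf{x};y) \;=\; D_{m,n}(\mbf{x}+y), \]
where $D_{m,n}(\mbf{x})$ is the symmetric-function enumerator of $(m,n)$-Dyck paths, i.e.\ the $y=0$ specialization of $\SS_{m,n}$. The second step is to feed this identity into the already-known Dyck generating-function formula and simplify the result.

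The reduction relies on the bijective remark already quoted in the paper: every $(m,n)$-Schr\"oder path is obtained uniquely from an $(m,n)$-Dyck path $\beta$ by selecting a subset $S$ of the distinct values appearing in the encoding of $\beta$ and barring the rightmost occurrence of each $k \in S$. The constraints ``$a_i=\overline{k} \Rightarrow a_i<a_{i+1}$'' and $a_i \leq \overline{\lfloor im/n\rfloor}$ precisely license this operation, with no boundary obstruction. For a fixed Dyck $\beta$ whose distinct encoding values carry multiplicities $n_{k_1},\ldots,n_{k_\ell}$, summing the Schr\"oder weight over the $2^\ell$ barring choices gives
\[ \prod_{j=1}^{\ell}\!\bigl( e_{n_{k_j}}(\mbf{x}) + y\,e_{n_{k_j}-1}(\mbf{x})\bigr) \;=\; \prod_{j=1}^{\ell} e_{n_{k_j}}(\mbf{x}+y), \]
by the identity $e_k(\mbf{x}+y) = e_k(\mbf{x}) + y\,e_{k-1}(\mbf{x})$ recalled in the introduction (a diagonal step contributes the factor $y$ and removes one unit from the riser multiplicity, while an unbarred step keeps it intact). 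Summing over Dyck $\beta$ then yields $D_{m,n}(\mbf{x}+y)$, establishing the reduction.

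Invoking the symmetric-function refinement of Bizley's formula \pref{bizley} for Dyck paths---the $y=0$ analogue of \pref{eq:bizley-schroder}---from \cite{AB14}, the plethystic substitution $\mbf{x} \mapsto \mbf{x}+y$ sends $ja\cdot \mbf{x}$ to $ja\cdot(\mbf{x}+y)$ inside the exponential, and this is exactly \pref{eq:bizley-schroder}.

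The main obstacle is not the Schr\"oder reduction itself---that follows essentially for free from the binomial-type expansion of $e_k(\mbf{x}+y)$---but rather the Dyck identity upon which it rests. A fully self-contained derivation would require a symmetric-function refinement of Bizley's cyclic/primitive-path decomposition: cutting each $(ad,bd)$ lattice path into primitive pieces at its low points, using the cyclic rotation argument to enumerate primitives of size $(ja,jb)$ with the correct weighted contribution $e_{jb}[ja\cdot \mbf{x}]/(aj)$, and recognizing the resulting product-of-primitives generating function as the exponential of the sum of their contributions. The care needed here is in tracking how riser-multiplicity data transform under cyclic rotation, so that the weighted sum over one cyclic orbit collapses to a single plethystic evaluation.
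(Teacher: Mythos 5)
Your proposal is correct, but it runs the paper's logic in reverse. The paper proves the proposition directly by redoing the whole Bizley machinery with the parameter $y$ built in: it introduces the unconstrained paths $\Bb$, computes $\B_{m,n}(\mbf{x};y)=\sum_k\binom{m}{k}e_{n-k}[m\,\mbf{x}]\,y^k=e_n[m(\mbf{x}+y)]$, applies the cycle-lemma rotation bijection $\Sst\times[m]\simeq\Bbt\times[\ell]$ to weighted sums, and then runs the primitive-path/exponential decomposition; the identity $\SS_{m,n}(\mbf{x};y)=C_{m,n}(\mbf{x}+y)$ is only obtained \emph{afterwards} (Corollary~\ref{prop1}) by comparing \pref{eq:bizley-schroder} with \pref{bizley2}. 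You instead prove that identity \emph{first}, via the barring bijection and the expansion $e_k(\mbf{x}+y)=e_k(\mbf{x})+y\,e_{k-1}(\mbf{x})$, and then import \pref{bizley2} from \cite{AB14} wholesale. Your bijective step is sound: condition (2) of the encoding forces any barred value to be the last occurrence of that value, so Schr\"oder paths correspond exactly to subsets of riser-ends of a Dyck path, and the $2^\ell$ barring choices of a fixed Dyck path contribute $\prod_j\bigl(e_{n_{k_j}}(\mbf{x})+y\,e_{n_{k_j}-1}(\mbf{x})\bigr)=\prod_j e_{n_{k_j}}(\mbf{x}+y)$ as you claim. What your route buys is notable: it is shorter, it avoids recomputing the rotation argument with the extra parameter, and it supplies precisely the ``direct interpretation of the substitution $\mbf{x}\mapsto\mbf{x}+y$'' that the paper's own Remark after Corollary~\ref{prop1} says is missing. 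What it costs is self-containedness: the entire analytic content (rotation lemma, primitive decomposition, $\exp$/$\log$) is outsourced to \pref{bizley2} of \cite{AB14}, whereas the paper's proof reproduces that argument and so would survive even if one did not grant the Dyck case; since the paper itself quotes \pref{bizley2} as known, this reliance is legitimate and not a gap, and you correctly flag it as the one ingredient that would need expansion in a fully standalone write-up.
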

\begin{proof}[\bf Proof.] 
The proof is inspired from Bizley's original proof \cite{bizley} (see also \cite{AB14}). For fixed $m$, $n$ and $k$, we consider two classes of lattice paths.
The first class is the class $\Ss$ of $(m,n)$-Schr\"oder  paths with exactly $k$ diagonal steps.
The second class, which is denoted by $\B_{m,n}^{(k)}$, consists of similar paths, with same start and end points, finishing with either 
a diagonal or a horizontal step,
but without the above diagonal condition. Naturally, $\B_{m,n}$ stands for the (clearly disjoint) union of the sets  $\B_{m,n}^{(k)}$.
See Figure \ref{fig:b-path}.
\begin{figure}
\begin{center}
\begin{tikzpicture}[scale=.7]
\grille{12}{9};
\est{0}{0};
\est{1}{0};
\nordest{2}{0};
\nord{3}{1};
\nord{3}{2};
\est{3}{3};
\est{4}{3};
\est{5}{3};
\nordest{6}{3};
\nordest{7}{4};
\nord{8}{5};
\est{8}{6};
\nordest{9}{6};
\nord{10}{7};
\est{10}{8};
\nord{11}{8};
\est{11}{9};
\end{tikzpicture}
\end{center}
\caption{An element of $\B_{12,9}^{(3)}$ with 2 low points.}\label{fig:b-path}
\end{figure}

We extend to such paths the symmetric function weight \eqref{sym_enumer} previously only considered on $\Ss$, and naturally set
\begin{equation}
\bleu{\Bb(\mbf{x}):=\sum_{\alpha} \alpha(\mbf{x})},\qquad {\rm and} \qquad
 \bleu{\B_{m,n}(\mbf{x};y):=\sum_k \Bb(\mbf{x})\, y^k},  
\end{equation}
with the first sum over the $\alpha$'s in $\Bb$.
Using the multinomial coefficient notation
   $$\binom{n}{\mu}:=\frac{n!}{(n-d)!\mu_1!\mu_2!\cdots \mu_k!},\qquad {\rm for}\qquad \mu=(\mu_1,\mu_2,\dots,\mu_k)\vdash d\leq n.$$
Let us prove that 
\begin{equation}\label{eq:LB}
\Bb(\mbf{x})=\sum_{\nu\vdash n-k} \binom{m}{k}\binom{m}{d_\nu}\,e_\nu(\mbf{x}).
\end{equation}
To see this, we observe that an element $\alpha$ of $\Bb$ is fully characterized by the following data
\begin{itemize}
\item A partition $\nu$ of $n-k$ which describes the ordered sequence of lengths for vertical risers, giving $\gamma(\alpha)$;
\item The positions of the $k$ diagonal steps among $m$, hence enumerated by the binomial $\binom{m}{k}$;
\item The positions of the parts of $\nu$, counted by the  binomial $\binom{m}{d_\nu}$.
\end{itemize}
Using a Bizley-like argument, exploiting the notion of low points,
we denote by $\Sst$ and $\Bbt$ the subsets of $\Ss$ and $\Bb$ consisting 
of paths with exactly $\ell$ low points ({\it i.e.} those having most negative offset, see~\pref{offset}). Continuing with the logic of our previous notations, we denote by $\Sst(\mbf{x})$  and $\Bbt(\mbf{x})$
the corresponding weighted sums.
We briefly recall the notion of {\em rotation} and refer to \cite{bizley,AB14} for a detailed presentation.
Let us consider an element $\alpha$ of $\Bbt$.
We may cut $\alpha$ at any of its $\ell$ low points, and transpose the two resulting path components.
This operation preserves the number of low points, as well as the risers of $\alpha$.
By this rotation principle, we have a set bijection
\begin{equation}
\Sst\times[m]\simeq\Bbt\times[\ell],
\end{equation}
hence it follows that
$$\sum_\ell {1\over \ell} \Sst(\mbf{x}) = {1\over m} \sum_\ell  \Bbt(\mbf{x}) 
=  {1\over m} \Bb(\mbf{x}).$$
Summing over $k$ gives:
\begin{equation}\label{eq:cle}
\sum_{\ell,k} {1\over \ell} \Sst(\mbf{x}) y^k  = {1\over m} \sum_k \B_{m,n}^{(k)}(\mbf{x})y^k = \frac{1}{m} \B_{m,n}(\mbf{x};y).
\end{equation}
Next, we recall the following expansion of \pref{plet_en}  in terms of elementary symmetric functions
  \begin{equation}\label{eq:enmx}
  \bleu{e_{n}[m\,\mbf{x}]= \sum_{\nu\vdash n} \binom{m}{d_\nu}\,e_\nu(\mbf{x})},
  \end{equation}
  with $d_\nu$ the partition giving the (ordered) multiplicities of the parts of $\nu$. Thus, using \eqref{eq:LB} and \eqref{eq:enmx} we calculate that 
\begin{align}
\B_{m,n}(\mbf{x};y)
&= \sum_k y^k\,\sum_{\nu\vdash n-k} \binom{m}{k}\binom{m}{d_\nu}\,e_\nu(\mbf{x})\nonumber\\
&= \sum_{k=0}^n \binom{m}{k}\,e_{n-k}[m\,\mbf{x}]\, y^k\nonumber\\
&= \sum_{k=0}^n e_{n-k}[m\,\mbf{x}]\,e_k[m\,y]\nonumber \\
&= \bleu{e_m[n(\mbf{x}+y)]}.
\end{align}
We may thus rewrite \eqref{eq:cle} as:
\begin{equation}\label{eq:cle2}
\sum_{\ell,k} {1\over \ell} \Sst(\mbf{x};y)  = {1\over m} e_m[n(\mbf{x}+y)],
\end{equation}
which is the crux of the proof.
We may then set: $S_{m,n}^{(*,\ell)}(\mbf{x},y):=\sum_{k} S_{m,n}^{(k,\ell)}(\mbf{x},y)$
and observe that
\begin{equation}
S_{ac,bc}^{(*,\ell)}(\mbf{x},y)=\sum_{\gamma\models_t c} S_{ac_1,bc_1}^{(*,1)}(\mbf{x},y)\,S_{ac_2,bc_2}^{(*,1)}(\mbf{x},y)\, \cdots\, S_{ac_k,bc_k}^{(*,1)}(\mbf{x},y)
\end{equation}
where the sum is over length $\ell$  compositions $\gamma=({c_1},{c_2},\dots,{c_t})$ of  $c$.
In other terms, if we set 
    $$S_{a,b}^{(*,1)}(\mbf{x},y,z):=\sum_{j=1}^\infty S_{aj,bj}^{(*,1)}(\mbf{x},y) \, z^j,$$ 
then  $S_{ac,bc}^{(*,\ell)}(\mbf{x},y)$ is the coefficient of $z^c$ in ${\big(S_{a,b}^{(*,1)}(\mbf{x},y,z)\big)}^\ell$.
Thus \eqref{eq:cle2} means that ${1\over ac}\,e_{ac}[bc(\mbf{x}+y)]$
is the coefficient of $z^c$ in $-\log(1-S_{a,b}^{(*,1)}(\mbf{x},y,z)))$, whence
\begin{equation}\label{eq:eq3}
S_{a,b}^{(*,1)}(\mbf{x},y,z) =  1-\exp\Big(-\sum_{c=1}^\infty {1\over ac}\,e_{ac}[bc(\mbf{x}+y)]\,z^c\Big).
\end{equation}
We observe that
$$\sum_{d\geq 0}\SS_{ad,bd}(\mbf{x})\,z^d = {1 \over 1-S_{a,b}^{(*,1)}(\mbf{x},y,z) }$$
which, together with \eqref{eq:eq3} gives\eqref{eq:bizley-schroder}.
\end{proof}

For example, for any $a$ and $b$ coprime, we get
\begin{align}
  \bleu{\SS_{a,b}(\mbf{x}+y)}&=\bleu{\frac{1}{a} e_b[a\,(\mbf{x}+y)]},\label{park_ab}\\[6pt]
  \bleu{\SS_{2a,2b}(\mbf{x}+y)}&=\bleu{\frac{1}{2a} e_{2b}[2a\,(\mbf{x}+y)] + \frac{1}{2a^2}\,e_{b}[a\,(\mbf{x}+y)]^2},\\[6pt]
  \bleu{\SS_{3a,3b}(\mbf{x}+y)}&=\bleu{\frac{1}{3a} e_{3b}[3a\,(\mbf{x}+y)] 
                                                      + \frac{1}{2a^2}\,e_{b}[a\,(\mbf{x}+y)] \,e_{2b}[2a\,(\mbf{x}+y)]
                                                      + \frac{1}{6a^3}\,e_{b}[a\,(\mbf{x}+y)]^3}.
\end{align}
Recall from~\cite{AB14} that we have  a Bizley-like formula  
 for the symmetric function enumeration 
  \begin{equation}
     \bleu{C_{m,n}(\mbf{x}):=\sum_{\diag(\alpha)=0} \alpha(\mbf{x})},
  \end{equation}
of $(m,n)$-Dyck paths, namely


\begin{equation}\label{bizley2}
\bleu{\sum_{d\geq 0}C_{ad,bd}(\mbf{x})\,z^d = 
          \exp\!\left(\sum_{j\geq 1} e_{jb}[ja\,\mbf{x}]   \,\frac{z^j}{aj}\right)},
  \end{equation}
This gives us the following formulation of Proposition \ref{prop:bizley_schroder}.
\begin{corollary}\label{prop1}
    For all $m$ and $n$, we have
\begin{equation}\label{cat_schrod} \bleu{\SS_{m,n}(\mbf{x};y) = C_{m,n}(\mbf{x}+y)}.\end{equation}
\end{corollary}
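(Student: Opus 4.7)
The plan is to extract the identity directly by comparing the two Bizley-type generating series we already have in hand. Write $(m,n) = (ac, bc)$ where $c = \gcd(m,n)$ and $(a,b)$ are coprime, so that both Proposition \ref{prop:bizley_schroder} and formula \eqref{bizley2} apply with these values of $a$ and $b$.

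First I would observe that the right-hand side of \eqref{bizley2} is a formal power series in $z$ whose coefficients are polynomial expressions in the symmetric functions $e_{jb}[ja\,\mbf{x}]$. Because plethysm in the outer alphabet is an algebra homomorphism in the inner alphabet, substituting $\mbf{x} \mapsto \mbf{x}+y$ throughout \eqref{bizley2} yields
\begin{equation*}
\sum_{d\geq 0} C_{ad,bd}(\mbf{x}+y)\,z^d = \exp\!\left(\sum_{j\geq 1} e_{jb}[ja\,(\mbf{x}+y)]\,\frac{z^j}{aj}\right).
\end{equation*}
The right-hand side is precisely the generating series furnished by Proposition \ref{prop:bizley_schroder} for $\sum_{d\geq 0}\SS_{ad,bd}(\mbf{x};y)\,z^d$.

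Equating the two generating series and reading off the coefficient of $z^c$ then gives
\begin{equation*}
\SS_{ac,bc}(\mbf{x};y) = C_{ac,bc}(\mbf{x}+y),
\end{equation*}
which is exactly \eqref{cat_schrod} for $(m,n)=(ac,bc)$. Since every pair $(m,n)$ has such a decomposition, the identity holds in general.

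There is essentially no obstacle here: the corollary is really a dictionary entry saying that the Schröder series is obtained from the Dyck series by the single substitution $\mbf{x} \mapsto \mbf{x}+y$. The only subtle point worth checking carefully is that the substitution is compatible with the plethystic brackets appearing in both formulas, but this is immediate from the fact that $e_{jb}[ja\,(\mbf{x}+y)]$ is by definition the plethystic evaluation of $e_{jb}[ja\,\mbf{z}]$ at $\mbf{z} = \mbf{x}+y$, and the outer coefficients $1/(aj)$ are constants of plethysm. No bijective or rotational argument is needed beyond what was already carried out in the proof of Proposition \ref{prop:bizley_schroder}.
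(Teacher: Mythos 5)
Your proof is correct and matches the paper's own derivation: the authors obtain Corollary \ref{prop1} precisely by comparing the generating series of Proposition \ref{prop:bizley_schroder} with the Dyck-path formula \eqref{bizley2} evaluated at $\mbf{x}+y$, exactly as you do (the Remark following the corollary confirms it was "proved by computation of both sides, and comparison"). No further commentary is needed.
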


\begin{remark}
We proved \eqref{cat_schrod} by computation of both sides, and comparison.
It would be interesting to get this equality directly by a suited interpretation
of the substitution $\mbf{x}\mapsto\mbf{x}+y$.
\end{remark}

    

As a special case, recalling that we assume $a$ and $b$ to be coprime, we have:
\begin{equation}\label{cas_coprime}
\bleu{\SS_{a,b}(\mbf{x};y)} = \bleu{ \frac{1}{a} e_b[a\,(\mbf{x}+y)]}.\\
\end{equation}
Moreover, we may write the coefficient of $y^k$ in~\pref{cas_coprime} as the following integer coefficient linear combination of the $e_\nu(\mbf{x})$:  
      \begin{equation}\label{formule_coprime}
     \bleu{\SS_{a,b}^{(k)}(\mbf{x})= \sum_{\nu\vdash b-k} \frac{1}{a}\binom{a}{k}\binom{a}{d_\nu}\,e_\nu(\mbf{x})}.
   \end{equation}
Since $\scalar{e_\mu(\mbf{x})}{\sum_{j\geq 0}e_j(\mbf{x})}=1$ for all partition $\mu$, we immediately get
    $$\SS_{a,b}^{(k)}=\Scalar{\SS_{a,b}^{(k)}(\mbf{x})}{\sum_{j\geq 0}e_j(\mbf{x})}.$$
Otherwise stated, for $a$ and $b$ coprime,
  \begin{equation}
     \bleu{\SS_{a,b}^{(k)}= \sum_{\nu\vdash b-k} \frac{1}{a}\binom{a}{k}\binom{a}{d_\nu}}.
   \end{equation}
 In particular, in view of~\pref{classical}, this covers the classical case ($m=n$) as well as the generalized 
 version ($m=rn$) of \cite{song}.
 One also deduces from Proposition~\ref{prop1} the following generalization of a result of Haglund~\cite{haglund}.
\begin{proposition} For all $m$ and $n$,
    \begin{equation}
      \bleu{ \SS_{m,n}^{(k)} = \Scalar{C_{m,n}(\mbf{x})}{e_{n-k}(\mbf{x})h_k(\mbf{x})} }
     \end{equation}
    where $\scalar{-}{-}$ stands for the usual scalar product on symmetric function\footnote{For which $\scalar{p_\mu}{p_\nu}= z_\mu\,\delta_{\mu,\nu}$}.
\end{proposition}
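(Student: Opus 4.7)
My plan is to combine Corollary~\ref{prop1} with the standard skewing formula $[y^k]F(\mbf{x}+y) = h_k^\perp F$, valid for any $F\in\Lambda$ and any fresh single variable $y$. I would first extract the coefficient of $y^k$ in the identity of Corollary~\ref{prop1}, which gives
\[
\SS_{m,n}^{(k)}(\mbf{x}) \;=\; [y^k]\,C_{m,n}(\mbf{x}+y) \;=\; h_k^\perp C_{m,n}(\mbf{x}).
\]
The skewing identity itself will be verified by a short Hopf-algebra computation: writing the coproduct as $\Delta(F)=\sum F_{(1)}\otimes F_{(2)}$ so that $F(\mbf{x}+y)=\sum F_{(1)}(\mbf{x})\,F_{(2)}(y)$, and using that a degree-$k$ symmetric function $G$ evaluated at the single variable $y$ equals $\scalar{G}{h_k}\,y^k$, one reads off $[y^k]F(\mbf{x}+y)=\sum F_{(1)}\scalar{h_k}{F_{(2)}}=h_k^\perp F$.

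Next I would pass from the symmetric-function enumerator $\SS_{m,n}^{(k)}(\mbf{x})$ to the numerical count. Since each $\alpha\in\Ss$ contributes a term $e_{\gamma(\alpha)}(\mbf{x})$, a product of $e_j$'s of total degree $n-k$, the cardinality $\SS_{m,n}^{(k)}$ is recovered by applying the linear form sending $e_\mu\mapsto 1$ for every $\mu\vdash n-k$. I would realize this linear form as the Hall pairing with $e_{n-k}$, thanks to the auxiliary identity
\[
\scalar{e_\mu(\mbf{x})}{e_{|\mu|}(\mbf{x})}\;=\;1 \qquad \text{for every partition } \mu.
\]
This identity follows by applying the involution $\omega$, rewriting the left-hand side as $\scalar{h_\mu}{h_n}$; then the expansion $h_n=\sum_{\lambda\vdash n}m_\lambda$ together with the duality between $\{h_\mu\}$ and $\{m_\lambda\}$ yields the claim. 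Consequently $\SS_{m,n}^{(k)}=\scalar{\SS_{m,n}^{(k)}(\mbf{x})}{e_{n-k}(\mbf{x})}$.

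Finally, combining the two steps and invoking the defining adjunction $\scalar{h_k^\perp F}{G}=\scalar{F}{h_k G}$ of the skewing operator, I would conclude
\[
\SS_{m,n}^{(k)} \;=\; \scalar{h_k^\perp C_{m,n}}{e_{n-k}} \;=\; \scalar{C_{m,n}(\mbf{x})}{e_{n-k}(\mbf{x})\,h_k(\mbf{x})},
\]
which is the stated formula. The only mildly delicate point will be the verification of $\scalar{e_\mu}{e_n}=1$ above; once that is in hand, the rest is straightforward bookkeeping in the Hopf-algebra framework of~$\Lambda$.
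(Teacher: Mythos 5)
Your proposal is correct and follows essentially the same route as the paper: both rest on the translation identity $f(\mbf{x}+y)=\sum_k y^k\,h_k^\perp f(\mbf{x})$ applied to Corollary~\ref{prop1}, the observation that pairing the weight $e_{\gamma(\alpha)}$ against $e_{n-k}$ (the paper uses $\sum_j e_j$ plus homogeneity) counts paths, and the adjunction $\scalar{h_k^\perp F}{G}=\scalar{F}{h_k G}$. The only differences are cosmetic --- you work coefficient-by-coefficient in $y$ rather than with the full generating function, and you supply Hopf-algebraic verifications of the two auxiliary identities that the paper simply quotes.
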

\begin{proof}[\bf Proof.]
We start by recalling the symmetric function identity
\begin{equation}\label{translation}
    \bleu{f(\mbf{x}+y)=\sum_{k\geq 0} y^k\, h_k^\perp\,  f(\mbf{x})},
 \end{equation}
 where $h_k^\perp$ stands for the dual of the operator of multiplication by $h_k(\mbf{x})$ with respect to the symmetric function scalar product. It follows directly from~\pref{cat_schrod} that
 \begin{eqnarray*}
     \sum_k   \SS_{m,n}^{(k)}\,y^k
          &=&\Scalar{C_{m,n}(\mbf{x}+y)}{\sum_{k\geq 0}e_k(\mbf{x})},\\
          &=&\Scalar{\sum_{k\geq 0} y^k\, h_k^\perp\, C_{m,n}(\mbf{x})}{\sum_{j\geq 0}e_j(\mbf{x})},\\
          &=&\Scalar{C_{m,n}(\mbf{x})}{\sum_{k\geq 0} y^k h_k(\mbf{x}) \sum_{j\geq 0}e_j(\mbf{x})},\\
          &=&\sum_{k\geq 0} \Scalar{C_{m,n}(\mbf{x})}{ h_k(\mbf{x}) e_{n-k}(\mbf{x})}\,y^k.
 \end{eqnarray*}
The last equality comes from the fact that $C_{m,n}(\mbf{x})$ is homogeneous of degree $n$, hence all terms of the wrong degree vanish in the scalar product. Evidently we get the announced result by comparing same degree powers of $y$ in both sides of the identity obtained.
\end{proof}    

\subsection{Area enumerator}\label{subsec:area}
 The $i^{\rm th}$ \defn{row area} of a path $\alpha$ in $\Schroder_{n}^{(r)}$, is the integer
    \begin{equation}\label{defn_row_area}
          \bleu{\area_i(\alpha):=\lfloor i\,m/n\rfloor- |a_i|},
      \end{equation}
  where  we set $|\overline{k}|:=k$.
 Summing over all indices $i$, between $1$ and $n$, we get the \defn{area} of $\alpha$:
    \begin{equation} 
              \bleu{\area(\alpha):=\sum_{i=0}^{n-1} \area_i(\alpha)}.
      \end{equation}   
This generalizes to $(m,n)$-Schr\"oder paths a notion of area on Schr\"oder paths introduced for the case $m=n$ in~\cite{bonin} (and further studied by \cite{barcucci}). Following the presentation of \cite{haglund}, this may also be understood as the number of ``upper'' triangles lying above the path and below the diagonal line (as illustrated in Figure~\ref{fig2}). These triangles are also called \defn{area triangles}.
 We have the area $q$-enumerator symmetric function 
   \begin{equation}
      \bleu{ \SS_{m,n}(\mbf{x};y,q)}:=\bleu{\sum_{\alpha} \alpha(\mbf{x})\,q^{\area(\alpha)}y^{\diag(\alpha)}}.
   \end{equation}
   Keeping up with our previous notation conventions, we also set
   \begin{equation}
     \bleu{ \SS_{m,n}^{(k)}(q)}:=\bleu{\sum_{\diag(\alpha)=k}  q^{\area(\alpha)}},\qquad{\rm and}\qquad
      \bleu{C_{m,n}(\mbf{x};q)}:=\bleu{\sum_{\diag(\alpha)=0} \alpha(\mbf{x})\,q^{\area(\alpha)}}.
   \end{equation}
  Observing that the area is independent of whether elements are barred or not, we deduce from Corollary~\ref{prop1} that
  \begin{corollary}\label{cor1} For all $m$ and $n$, we have
   \begin{equation}
      \bleu{\SS_{m,n}(\mbf{x};y,q) = C_{m,n}(\mbf{x}+y;q)},
   \end{equation}
and
       \begin{equation}\label{haglundmn}
        \bleu{\SS_{m,n}^{(k)}(q) = \Scalar{C_{m,n}(\mbf{x};q)}{e_{n-k}(\mbf{x})h_k(\mbf{x})}}.
     \end{equation}
  \end{corollary}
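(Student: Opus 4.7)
The plan is to deduce both equalities from Corollary~\ref{prop1} by attaching the parameter $q^{\area}$ to every computation, exploiting the fact that the area statistic is insensitive to bars. The first identity is a direct $q$-refinement obtained via the ``barring bijection'' implicit in the remark preceding Proposition~\ref{prop:bizley_schroder}; the second then follows by exactly the scalar-product manipulation used in the proof of Proposition~\ref{prop1}, applied to this enriched generating series.

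For the first identity, I would group the sum defining $\SS_{m,n}(\mbf{x};y,q)$ by the \emph{underlying Dyck path} $\alpha_0$, obtained by replacing every $\overline{k}$ with $k$ in the encoding of a Schr\"oder path $\alpha$. Condition~(2) forces any bar to sit on the rightmost occurrence of its value, so the fiber over $\alpha_0$ is indexed by subsets $B$ of the distinct parts of $\alpha_0$, one choice ``bar or not'' per distinct riser value. Summing the weight $\alpha(\mbf{x})\,y^{\diag(\alpha)}$ over this fiber, factor by factor via $e_k(\mbf{x}+y)=e_k(\mbf{x})+y\,e_{k-1}(\mbf{x})$, yields exactly $\alpha_0(\mbf{x}+y)$. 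Since $|\overline{k}|=k$ in~\eqref{defn_row_area}, the area is constant on each fiber, so $q^{\area(\alpha)}$ may be pulled out as $q^{\area(\alpha_0)}$; reassembling the sum gives $\sum_{\alpha_0}\alpha_0(\mbf{x}+y)\,q^{\area(\alpha_0)}=C_{m,n}(\mbf{x}+y;q)$.

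The second identity then follows by a routine adaptation of the proof of Proposition~\ref{prop1}. I would take the scalar product of the freshly-established identity with $\sum_{j\geq 0}e_j(\mbf{x})$: on the left, $\langle e_\mu(\mbf{x}),\sum_j e_j(\mbf{x})\rangle=1$ collapses the elementary monomial weights and extracts $\sum_k\SS_{m,n}^{(k)}(q)\,y^k$; on the right, applying the translation formula~\pref{translation}, pushing $h_k^\perp$ across the bracket by adjunction, and using that $C_{m,n}(\mbf{x};q)$ remains homogeneous of degree $n$ in $\mbf{x}$ (the parameter $q$ being inert), one obtains $\sum_k y^k\,\langle C_{m,n}(\mbf{x};q),\,h_k(\mbf{x})e_{n-k}(\mbf{x})\rangle$. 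Comparing coefficients of $y^k$ yields~\eqref{haglundmn}. The only delicate point is the first step: one must verify that the barring bijection correctly enumerates the fiber and that area is preserved on it, but both are immediate from conditions~(1)--(3) together with the convention $|\overline{k}|=k$, so I anticipate no real obstacle.
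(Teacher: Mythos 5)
Your proposal is correct, and its second half (scalar product with $\sum_j e_j(\mbf{x})$, the translation formula \pref{translation}, adjunction of $h_k^\perp$, and the degree argument) is exactly the paper's route to \eqref{haglundmn}. For the first identity, however, you do something slightly stronger than the paper. The paper's proof is the one-line deduction ``area is independent of barring, hence the $q$-refinement of Corollary~\ref{prop1} holds,'' where Corollary~\ref{prop1} itself was obtained by computing both Bizley-type generating series \eqref{eq:bizley-schroder} and \eqref{bizley2} and comparing them. You instead prove the $q$-identity from scratch: you fiber the sum over the bar-forgetting map, observe via condition (2) that each fiber is indexed by subsets of the distinct riser values of the underlying Dyck path $\alpha_0$, and sum the weights factor by factor using $e_k(\mbf{x}+y)=e_k(\mbf{x})+y\,e_{k-1}(\mbf{x})$ to get $\alpha_0(\mbf{x}+y)\,q^{\area(\alpha_0)}$. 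This is precisely the combinatorial content the paper gestures at (``any Schr\"oder path is obtained by barring or not the rightmost part of a given size'') but never assembles into a computation; in fact, specializing your argument at $q=1$ yields the direct interpretation of the substitution $\mbf{x}\mapsto\mbf{x}+y$ that the Remark following Corollary~\ref{prop1} explicitly asks for. What you lose is nothing; what you gain is a self-contained, bijective proof that does not route through the generating-function identity of Proposition~\ref{prop:bizley_schroder}. One small point worth stating explicitly if you write this up: barring the rightmost occurrence of a value preserves condition (3) (since $k\leq\lfloor im/n\rfloor$ implies $\overline{k}\leq\overline{\lfloor im/n\rfloor}$ in the order $0<\overline{0}<1<\overline{1}<\cdots$), so every subset of distinct values really does give a valid Schr\"oder path.
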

From a result of~\cite{haiman},
it follows that $C_{rn+1,n}(\mbf{x};q)=C_{rn,n}(\mbf{x};q)=\nabla^r(e_n)\big|_{t=1}$, where $\nabla$ is a Macdonald ``eigenoperator'' introduced in~\cite{bergerongarsia}. By this, we mean that its eigenfunctions are the (combinatorial) $q,t$-Macdonald polynomials. Thus, a special instance of~\pref{haglundmn} may be formulated as
 \begin{equation}\label{haglundconj}
     \bleu{\SS_{rn,n}^{(k)}(q)=\Scalar{\nabla^r(e_n)\big|_{t=1}}{ e_{n-k}(\mbf{x})h_k(\mbf{x})}}.
       \end{equation}
 In this way, we get back the case $t=1$ of  Proposition 1 in~\cite{haglund}.
 
\section{Constant term formula} 
The following constant term formula adds an extra parameter to our story. We conjecture that it corresponds to a $(q,t)$-enumeration of $(m,n)$-Schr\"oder parking function, with $t$ accounting for a correctly defined ``dinv''-statistic. 
\begin{equation}\label{negut}
    \bleu{\SS_{m,n}(\mbf{x};y,q,t):= \mathrm{CT}_{z_m,\ldots,z_0}\left(\frac{1}{\mbf{z}_{m,n}}\prod_{i=1}^{m}\frac{z_i(1+y\,z_i)}{z_i-q\,z_{i+1}}\Omega'(\mbf{x};z_i)\prod_{j=i+1}^m\frac{(z_i-z_j)(z_i-qt\,z_j)}{(z_i-qz_j)(z_i-tz_j)}\right)},
\end{equation}
where $\Omega'(\mbf{x};z):=\sum_{k\geq 0} e_k(\mbf{x})\,z^k$, and $\mbf{z}_{m,n}:=\prod_{i=0}^{n-1} z_{\lfloor im/n\rfloor}$. We recall that some care must be used in evaluating multivariate constant term expressions. Indeed,  the order in which successive constant terms are taken does have an impact on the overall result. This is why, in the above formula, the indices appearing after  ``{CT}'' specify that this should be done starting with $z_m$, and then going down to $z_0$.
For example, we have
\begin{eqnarray*}
  \bleu{\SS_{2,2}(\mbf{x};y,q,t)}&=&\rouge{(s_{{2}}+ (q+t)\,s_{{11}})}\bleu{+ ( q+t+1)\, s_{{1}}\,y+{y}^{2}},\\
  \bleu{\SS_{2,3}(\mbf{x};y,q,t)}&=&\rouge{(s_{{21}}+ (q+t)\, s_{{111}})}\bleu{+ ( s_{{2}}+ ( q+t+1) s_{{11}} )\, y+s_{{1}}\,{y}^{2}},\\
  \bleu{\SS_{2,4}(\mbf{x};y,q,t)}&=&\rouge{(s_{{22}}+ (q+t)\, s_{{211}}+ ( {q}^{2}+qt+{t}^{2})\, s_{{1111}})}\\
     &&\qquad\bleu{+ (  ( q+t+1)\, s_{{21}}+ ( {q}^{2}+qt+{t}^{2}+q+t )\, s_{{111}} )\, y}\\
     &&\qquad\bleu{+ ( s_{{2}}+ ( q+t )\, s_{{11}} )\, {y}^{2}}.
\end{eqnarray*}
We underline that formula~\pref{negut} is simply the evaluation at $\mbf{x}+y$ of a similar formula conjectured in~\cite{negut} 
 in relation with $(m,n)$-parking functions. More precisely, it is conjectured in the mentioned paper, that
\begin{equation}
    \bleu{C_{m,n}(\mbf{x};q,t):= \mathrm{CT}_{z_m,\ldots,z_0}\left(\frac{1}{\mbf{z}_{m,n}}\prod_{i=1}^{m}\frac{z_i}{z_i-q\,z_{i+1}}\Omega'(\mbf{x};z_i)\prod_{j=i+1}^m\frac{(z_i-z_j)(z_i-qt\,z_j)}{(z_i-qz_j)(z_i-tz_j)}\right)},
\end{equation}
from which it is clear that \pref{negut} follows, since $\Omega'(\mbf{x}+y;z_i)=(1+y\,z_i)\,\Omega'(\mbf{x};z_i)$.
One may readily show
 that the specialization at $t=1$ of the right-hand side of \pref{negut} does indeed give back our previous $C_{m,n}(\mbf{x}+y;q)=\SS_{m,n}(\mbf{x};y,q)$, since the relevant constant term formula is shown to hold in~\cite{compositionalshuffle}.

This, together with the results and conjectures that appear in~\cite{compositionalshuffle}, opens up a lot of new avenues of exploration. In particular, we may obtain explicit candidates for the $(q,t)$-enumeration of special families of $(m,n)$-Schr\"oder paths (say with return conditions to the diagonal), by the simple device of evaluating at $\mbf{x}+y$  analogous symmetric function formulas for $(m,n)$-Dyck paths. Several questions regarding this are explored in~\cite{positivity}.

\section{Schr\"oder Parking functions} 
An \defn{$(m,n)$-Schr\"oder parking function} is a bijective labeling of the up steps of a $(m,n)$-Schr\"oder path $\alpha$ by the elements of $\{1,2,\ldots,n-\diag(\alpha)\}$. One further imposes the condition that consecutive up steps of same $x$-coordinate have decreasing labels reading them from top to bottom. The path involved in this description is said to be the  \defn{shape} of the parking function. For  $\alpha$ an $(m,n)$-Schr\"oder path, we denote by $\P(\alpha)$ the set of parking function having shape $\alpha$.  
When $\diag(\alpha)=0$, we get the ``usual'' notion of parking functions of shape $\alpha$ (an $(m,n)$-Dyck path).
The $(m,n)$-Schr\"oder parking functions may be understood as preference functions, with some of the parking places being closed to parking (these correspond to diagonal steps). 
For $f\in\P(\alpha)$, we denote by $\area_i(f)$ the row area for the $i^{\rm th}$ row of the shape of $f$. 
Figure~\ref{fig3} gives an example of a parking function of shape $000\overline{0}1\overline{1}223$.

\begin{figure}
\begin{center}
\begin{tikzpicture}[scale=.7]
\foreach \x in {0,...,0}
\filldraw[color=yellow, fill=yellow](0+\x,1)-- (1+\x,1) -- (1+\x,1+1) -- cycle;
\foreach \x in {0,...,1}
\filldraw[color=yellow, fill=yellow](0+\x,2)-- (1+\x,2) -- (1+\x,2+1) -- cycle;
\foreach \x in {0,...,3}
\filldraw[color=yellow, fill=yellow](0+\x,3)-- (1+\x,3) -- (1+\x,3+1) -- cycle;
\foreach \x in {0,...,3}
\filldraw[color=yellow, fill=yellow](1+\x,4)-- (2+\x,4) -- (2+\x,4+1) -- cycle;
\foreach \x in {0,...,3}
\filldraw[color=yellow, fill=yellow](2+\x,5)-- (3+\x,5) -- (3+\x,5+1) -- cycle;
\foreach \x in {0,...,5}
\filldraw[color=yellow, fill=yellow](2+\x,6)-- (3+\x,6) -- (3+\x,6+1) -- cycle;
\foreach \x in {0,...,6}
\filldraw[color=yellow, fill=yellow](2+\x,7)-- (3+\x,7) -- (3+\x,7+1) -- cycle;
\foreach \x in {0,...,6}
\filldraw[color=yellow, fill=yellow](3+\x,8)-- (4+\x,8) -- (4+\x,8+1) -- cycle;
\foreach \x in {0,...,12}
\draw[blue] (\x,0) -- (\x,9);
\foreach \x in {0,...,9}
\draw[blue] (0,\x) -- (12,\x);
\draw[black, thick] (0,0) -- (12,9) ;
\draw[red,ultra thick] (0,0) -- (0,0+1);
\draw[red,ultra thick] (0,1) -- (0,1+1);
\draw[red,ultra thick] (0,2) -- (0,2+1);
\draw[red,ultra thick] (0,3) -- (0+1,3+1);
\draw[red,ultra thick] (1,4) -- (1,4+1);
\draw[red,ultra thick] (1,5) -- (1+1,5);
\draw[red,ultra thick] (2,5) -- (2,5+1);
\draw[red,ultra thick] (2,6) -- (2,6+1);
\draw[red,ultra thick] (2,7) -- (2+1,7+1);
\draw[red,ultra thick] (3,8) -- (3,8+1);
\draw[red,ultra thick] (3,9) -- (3+1,9);
\draw[red,ultra thick] (4,9) -- (4+1,9);
\draw[red,ultra thick] (5,9) -- (5+1,9);
\draw[red,ultra thick] (6,9) -- (6+1,9);
\draw[red,ultra thick] (7,9) -- (7+1,9);
\draw[red,ultra thick] (8,9) -- (8+1,9);
\draw[red,ultra thick] (9,9) -- (9+1,9);
\draw[red,ultra thick] (10,9) -- (10+1,9);
\draw[red,ultra thick] (11,9) -- (11+1,9);
\filldraw[color=blue, fill=white] (.5,.5) circle (.4); \node at (.5,.5) {$1$};
\filldraw[color=blue, fill=white] (.5,1.5) circle (.4); \node at (.5,1.5) {$2$};
\filldraw[color=blue, fill=white] (.5,2.5) circle (.4); \node at (.5,2.5) {$6$};
\filldraw[color=blue, fill=white] (1.5,4.5) circle (.4); \node at (1.5,4.5) {$7$};
\filldraw[color=blue, fill=white] (2.5,5.5) circle (.4); \node at (2.5,5.5) {$3$};
\filldraw[color=blue, fill=white] (2.5,6.5) circle (.4); \node at (2.5,6.5) {$5$};
\filldraw[color=blue, fill=white] (3.5,8.5) circle (.4); \node at (3.5,8.5) {$4$};
\end{tikzpicture}
\end{center}\caption{A Schr\"oder parking function.}
\label{fig3}
\end{figure}
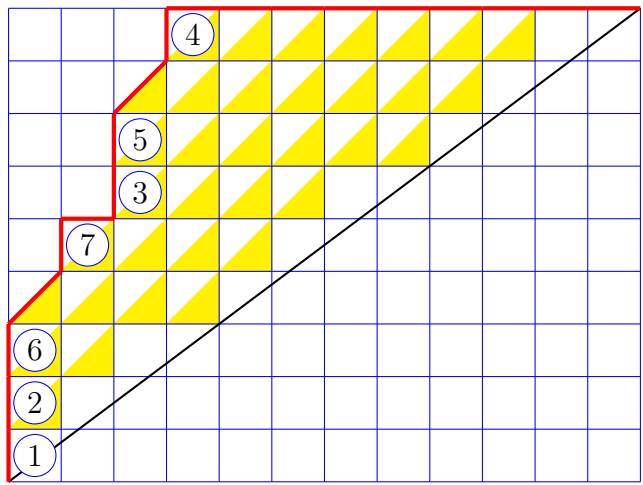

As we did for paths, we consider the $(m,n)$-Schr{\ou}der parking function polynomial
    \begin{equation}
       \bleu{P_{m,n}(y,q)= \sum_k P_{m,n}^{(k)}\,y^k} :=\bleu{\sum _{\alpha} |\P(\alpha)|  
               \,q^{\area(\alpha)}y^{\diag(\alpha)}}.
    \end{equation}
It is easy to derive from Corollary~\ref{cor1}
that
 \begin{corollary} For all $m$ and $n$, we have
   $$\bleu{P_{m,n}(y,q)=\LargeScalar{C_{m,n}(\mbf{x}+y;q)}{\frac{1}{1-p_1(\mbf{x})}}}.$$
Equivalently, for all $k$
          \begin{equation}
        \bleu{P_{m,n}^{(k)}(q)=\Scalar{C_{m,n}(\mbf{x};q)}{p_1(\mbf{x})^{n-k}h_k(\mbf{x})}},
      \end{equation}
 \end{corollary}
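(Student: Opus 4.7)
The plan is to reduce the statement to a combinatorial identity for $|\P(\alpha)|$, then to pair against $\SS_{m,n}(\mbf{x};y,q)$ and invoke Corollary~\ref{cor1}. The key combinatorial observation I would establish first is that
$$|\P(\alpha)| = \binom{n-\diag(\alpha)}{\gamma_1,\gamma_2,\ldots,\gamma_\ell},$$
where $\gamma(\alpha)=(\gamma_1,\ldots,\gamma_\ell)$ records the riser lengths of~$\alpha$. Indeed, a parking function on $\alpha$ amounts to choosing, for each riser, a subset of labels from $\{1,\ldots,n-\diag(\alpha)\}$ of the appropriate size; the decreasing condition then forces a unique placement within each riser. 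The ordered partition of the labels among risers produces exactly the displayed multinomial coefficient.

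Next I would translate this multinomial into a scalar product. Since $\alpha(\mbf{x})=e_{\gamma(\alpha)}(\mbf{x})$, extracting the coefficient of $x_1x_2\cdots x_{|\gamma|}$ in the product $e_{\gamma_1}\cdots e_{\gamma_\ell}$ amounts to distributing $\{1,\ldots,|\gamma|\}$ into the ordered factors, giving $\binom{|\gamma|}{\gamma_1,\ldots,\gamma_\ell}$. Using $p_1=h_1$ and the duality $\langle h_\lambda,m_\mu\rangle=\delta_{\lambda\mu}$, together with $m_{1^N}=e_N$, this coefficient is precisely $\langle \alpha(\mbf{x}),p_1(\mbf{x})^{n-\diag(\alpha)}\rangle$. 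By homogeneity, $\langle \alpha(\mbf{x}),p_1(\mbf{x})^k\rangle$ vanishes unless $k=n-\diag(\alpha)$.

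Combining these two observations, I sum over Schr\"oder paths $\alpha$:
\begin{align*}
P_{m,n}(y,q) &= \sum_\alpha |\P(\alpha)|\,q^{\area(\alpha)}y^{\diag(\alpha)}
 = \sum_\alpha \Scalar{\alpha(\mbf{x})}{p_1^{n-\diag(\alpha)}}\,q^{\area(\alpha)}y^{\diag(\alpha)}\\
&= \LargeScalar{\sum_\alpha \alpha(\mbf{x})\,q^{\area(\alpha)}y^{\diag(\alpha)}}{\frac{1}{1-p_1(\mbf{x})}}
 = \LargeScalar{\SS_{m,n}(\mbf{x};y,q)}{\frac{1}{1-p_1(\mbf{x})}},
\end{align*}
where the degree-vanishing lets me freely replace $p_1^{n-\diag(\alpha)}$ by the geometric series $1/(1-p_1)$. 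Applying Corollary~\ref{cor1} then yields the first identity.

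For the second identity, I would invoke the translation formula~\pref{translation}, writing $C_{m,n}(\mbf{x}+y;q)=\sum_{k\geq 0}y^k\,h_k^\perp C_{m,n}(\mbf{x};q)$, move $h_k^\perp$ to the other side of the scalar product as multiplication by $h_k(\mbf{x})$, and again use that $C_{m,n}(\mbf{x};q)$ is homogeneous of degree $n$ so that only the $p_1^{n-k}$ summand of $1/(1-p_1)$ survives. Extracting the coefficient of $y^k$ gives the advertised formula. None of the steps is really an obstacle; the only nontrivial point is recognizing that the multinomial counting parking labelings of $\alpha$ is exactly the pairing $\langle e_{\gamma(\alpha)},p_1^{|\gamma(\alpha)|}\rangle$, after which everything is a formal manipulation with adjoints and Corollary~\ref{cor1}.
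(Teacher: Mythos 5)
Your proof is correct and takes exactly the route the paper intends: the paper states this corollary without proof (``It is easy to derive from Corollary~\ref{cor1}''), and your argument --- identifying $|\P(\alpha)|$ with the multinomial coefficient $\Scalar{e_{\gamma(\alpha)}(\mbf{x})}{p_1(\mbf{x})^{n-\diag(\alpha)}}$, using homogeneity to replace $p_1^{n-\diag(\alpha)}$ by $1/(1-p_1)$, summing over shapes, and then applying Corollary~\ref{cor1} and the translation formula \pref{translation} for the second identity --- is precisely the standard filling-in of that omitted derivation. No gaps.
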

 It follows from this, and the observation preceding~\pref{haglundconj}, that
 \begin{equation}
     \bleu{P_{rn,n}^{(k)}(q)=\Scalar{\nabla^r(e_n)\big|_{t=1}}{p_1(\mbf{x})^{n-k}h_k(\mbf{x})}}.
     \end{equation}
Also, for $a$ and $b$ coprime, we have
    \begin{equation}
       \bleu{P_{a,b}^{(k)} =\binom{a}{k} a^{b-k-1}}. 
     \end{equation}

%


\end{document}